\documentclass[11pt,reqno]{amsart}

\usepackage[T1]{fontenc}     
\usepackage{lmodern}         
\usepackage{amsmath, amsthm, amssymb,amsaddr}
\usepackage{dcolumn}
\usepackage{bm}
\usepackage{amsmath} 
\usepackage{mathrsfs}
\usepackage{mathtools}
\usepackage{algpseudocode}
\usepackage{algorithm}
\usepackage[pdftex]{graphicx}
\usepackage{subfigure}
\usepackage{epstopdf}
\usepackage{tikz}
\usepackage{xcolor}
\usepackage[export]{adjustbox}
\usepackage{caption}
\usepackage{hyperref}
\usepackage[pagewise]{lineno}
\usepackage{multirow}
\usepackage{caption}
\usepackage{tikz}
\usepackage{subcaption}
\usepackage[export]{adjustbox} 

\theoremstyle{remark}

\newtheorem{lemma}{Lemma}

\DeclareMathOperator*{\argmin}{arg\,min}

\newif\ifblacktext
\blacktextfalse 

\ifblacktext
\fi

\newcommand{\be}[1]{\begin{equation}\label{#1}}
\newcommand{\ee}{\end{equation}}

\newcommand{\no}[1]{#1}

\newcommand{\bx}{\boldsymbol{x}}

\newcommand{\bn}{\boldsymbol{n}}

\newcommand{\bX}{\boldsymbol{X}}
\newcommand{\bt}{\mathfrak{t}}
\newcommand{\bu}{\boldsymbol{u}}
\newcommand{\bro}{\boldsymbol{\rho}}

\renewcommand{\no}[1]{} 

\title[A Reciprocity-Law-Compliant Photoacoustic Forward-Adjoint Operator]{A Reciprocity-Law-Compliant Photoacoustic Forward-Adjoint Operator}

\author{Ashkan Javaherian}
\address{Department of Bio-Electric, School of Electrical and Computer Engineering, University College of Engineering, University of Tehran, Tehran, Iran.}
\email{ajavherian62@gmail.com; ashkan.javaherian@ut.ac.ir}
\date{May 2026}

\begin{document} 
\maketitle

\begin{abstract}
We extend the forward--adjoint operator framework derived in our previous study to photoacoustic tomography (PAT). In that earlier work, the acoustic forward operator included a reception operator that maps, at each time step, the pressure wavefield in free space onto the boundary (receiver surface). It was shown that this reception operator serves as a left-inverse of an emission operator that maps the pressure restricted to the boundary (emitter surface) onto free space, perfectly complying with the reciprocity law of physics. In this study, we define the full PAT forward operator as a composite mapping composed of an acoustic forward operator equipped with a scaled variant of the previously proposed reception operator, and an operator describing the photoacoustic source. Singularities arising both in the reception step (due to the boundary restriction) and in the photoacoustic source (due to its instantaneous nature) are regularized using regularized Dirac delta distributions. The resulting PAT forward--adjoint operator pair satisfies an inner-product relation, which we verify through numerical experiments on a discretized domain. The effectiveness of the proposed operator pair is further demonstrated using an iterative minimization framework that yields both qualitatively and quantitatively accurate reconstructions of an initial pressure distribution from the corresponding Dirichlet-type boundary data.
\end{abstract}


\section{Introduction}

Photoacoustic tomography (PAT) is a hybrid imaging modality that combines the complementary advantages of optical and acoustic techniques. Optical illumination provides strong contrast due to the significant variability in optical absorption among biological tissues, while acoustic detection enables high spatial resolution because ultrasound waves experience relatively low scattering in tissue \cite{Xu2006}. 

In PAT, nanosecond electromagnetic pulses, typically in the visible or near-infrared spectral ranges, are used to irradiate the tissue. A portion of the delivered optical energy is absorbed by light-absorbing structures (chromophores) and partially converted into heat \cite{Rosencwaig1976,Li2009}. The resulting temperature rise at each location, which is proportional to the local optical absorption coefficient and photon density, induces thermoelastic expansion and leads to a rapid increase in pressure \cite{Li2009}. 

These locally generated pressure rises propagate outward as acoustic waves and are subsequently detected over time by ultrasound transducers placed on a detection surface surrounding the sample \cite{Xu2006,Li2009}.

The physics of photoacoustic tomography can be naturally separated into two stages. The first stage describes the generation of heat due to the absorption of optical energy, which gives rise to an initial pressure distribution \cite{Li2009,Bal2010}. The second stage describes the propagation of this initial pressure as acoustic waves \cite{Xu2006,Kruger1995}.

Accordingly, the inverse problem is typically decomposed into two parts. The acoustic inversion problem consists of reconstructing the initial pressure distribution from pressure measurements recorded in time on a detection surface \cite{Xu2006,Kruger1995}. The optical inversion problem consists of recovering the optical absorption coefficient from the reconstructed initial pressure distribution \cite{Li2009,Bal2010,Javaherian2019}. Our study, like the majority of work in the field, focuses on the acoustic inverse problem.

In Section~2, we review existing approaches for solving the acoustic step of the photoacoustic inverse problem and subsequently outline the contributions of our study to this active research area. Section~3 outlines the main formulas and algorithms to implement the forward and adjoint operators for the particular application of photoacoustic tomography. Section~4 presents numerical results for two-dimensional scenarios. This section begins with two numerical experiments designed to measure the accuracy of the derived adjoint operator under two configurations: (1) a simplified case in which receiver surfaces are assumed to be aligned with the sampled points on the computational grid, and (2) a more realistic configuration in which receivers are positioned on a circular boundary and therefore do not necessarily coincide with grid points. The accuracy of the derived adjoint operator is evaluated using an inner-product consistency test involving the forward operator and its adjoint. For each configuration, the test employs a randomly generated initial pressure source located within the receiver array, along with randomized boundary data. For the second receiver configuration, a full photoacoustic imaging scenario is set up and solved iteratively using our derived forward–adjoint operator pair within an iterative minimization framework. A brief discussion and concluding remarks are then provided in Section~5.

\section{Existing Approaches and Our Contributions}

This section first describes the existing approaches for solving the acoustic inverse problem in photoacoustic tomography (PAT), and then outlines our contributions to this area.

\subsection{Existing Approaches}

In this section, we provide a thorough review of analytical and numerical approaches for solving the acoustic inverse problem in PAT.

In \cite{Kruger1995}, the pressure time series recorded on the detection surface is described using an operator resembling a three-dimensional Radon transform of the heat source, where the integration is performed over spherical surfaces rather than planes. In \cite{Kostli2001}, an exact backprojection formula in the three-dimensional spatial frequency domain was proposed to reconstruct the initial source from the propagated pressure time series recorded on a planar surface. In \cite{Finch2004}, the heat source is reconstructed from its mean values over a family of spheres centered on the boundary, leading to a backprojection formula expressed in terms of a single-layer potential integral evaluated over the boundary. In \cite{Kunyanski2012}, fast one-step backprojection algorithms were proposed to reconstruct the initial source from its spherical mean values over spheres centered on the boundary. These algorithms were designed to reduce the computational cost associated with inversion formulas based on spherical means \cite{Finch2004}.

In \cite{Xu2005}, an exact reconstruction formula was proposed to recover the heat source from pressure time series generated by the source and sampled over time on planar, spherical, or cylindrical detection surfaces after propagation through the medium. This formula, known as the \textit{Universal Backprojection} (UBP) formula, can be expressed analytically as a time-reversed double-layer potential integral evaluated over the boundary \cite{Xu2005}. In \cite{Burgholzer2007b}, a far-field UBP formula was derived to reconstruct two-dimensional initial sources from three-dimensional data measured by integrating line detectors arranged on a rotating cylindrical detection surface. In \cite{Haltmeier2014}, UBP-type reconstruction formulas were proposed for recovering a function in arbitrary dimensions from its spherical means centered on the boundary of a convex domain with arbitrary shape.

In \cite{Ammari2010}, two methods were proposed for reconstructing small absorbers from the induced acoustic waves measured on the boundary. These approaches are based on isolating the photoacoustic signal generated by a targeted optical absorber from those produced by other absorbers in the medium \cite{Ammari2010}. The corresponding reconstruction formulas were derived for both Neumann and Dirichlet boundary conditions imposed on the boundary.

It was shown in \cite{Hristova2008} that for any initial pressure source with bounded support, the resulting wave leaves any bounded domain within a finite time \(T\). Consequently, the initial source can be reconstructed by reversing the wave equation in time, using a vanishing Cauchy condition at the final time \(T\), together with a boundary condition given by a time-reversed version of the data measured on the boundary over the time interval \([0,T]\) \cite{Hristova2008}. It was further shown that the time-reversal (TR) approach can exactly reconstruct the initial source only in the case of constant sound speed and in odd spatial dimensions \cite{Finch2004}. Nevertheless, the TR approach remains sufficiently accurate for variable non-trapping sound speeds \cite{Hristova2008}, particularly when the measurement data are available over the entire boundary. In \cite{Burgholzer2007a}, a numerical implementation of a second-order wave equation system approximating the time-reversal approach on a Cartesian grid was proposed. In \cite{Nguyen2016}, a TR approach was developed for the case of a perfectly reflecting boundary. The associated second-order wave system employs mixed boundary conditions, which are implemented using finite differences in time on a Cartesian grid.

An iterative implementation based on a forward operator and a modified time-reversal operator, formulated as a convergent Neumann series algorithm, was proposed in \cite{Stefanov2009} and \cite{Qian2011}. In this approach, the time-reversal procedure is modified such that the Cauchy condition at the final time \(T\) is given by the harmonic extension of the boundary data. The uniqueness and stability of the resulting Neumann-series inversion formula, under the assumption of variable and non-trapping sound speed, were established in \cite{Stefanov2009}. A numerical realization of the proposed Neumann series algorithm was presented in \cite{Qian2011}. The numerical results demonstrate that the algorithm converges even in the presence of trapping sound speeds, such as those encountered in brain imaging, or when the measurement data are only partially available on the boundary. In \cite{DeanBen2012}, an objective function describing the mismatch between the measured and modeled data was iteratively minimized. Under the assumption of constant sound speed, this minimization approach employed the Poisson-type integral proposed in \cite{Kruger1995} to iteratively model the data on the boundary.

In the context of iterative minimization of an objective function, the adjoint of the photoacoustic forward operator has been derived in several studies. In \cite{Arridge2016}, the photoacoustic operator is modeled as a mapping from the initial source to pressure fields averaged over small but finite spatial volumes whose centers lie on the boundary \cite{Arridge2016}. The resulting adjoint operator resembles the spherical-mean backprojection operator proposed in \cite{Kruger1995}, but with the integration performed over small but finite spatial volumes centered on the boundary, rather than over the boundary surface itself.  An adjoint formulation equivalent to that presented in \cite{Arridge2016} was derived directly on a discretized domain in \cite{Huang2013}. The work in \cite{Huang2013} follows a discretize-then-adjoint approach, whereas \cite{Arridge2016} adopts an adjoint-then-discretize strategy. The latter was later generalized to viscoelastic media \cite{Javaherian2018}.

In \cite{Belhachmi2016}, a hybrid approach combining the Finite Element Method (FEM) and the Boundary Element Method (BEM) was proposed to approximate the forward and adjoint operators of photoacoustic tomography in the time domain. This framework is based on weak formulations of the forward and adjoint problems with Neumann boundary conditions, where the boundary conditions at each time step are obtained through a boundary element formulation. In \cite{Haltmeier2017}, using the same weak formulation, two systems of adjoint wave equations were derived that solve the associated time-reversed single-layer potential integral formulations. In \cite{Do2018}, reconstruction formulas for recovering the standard Radon projections of the initial source from boundary data were extended to the case where the data are available only over a reduced time interval and on a truncated acquisition surface. These reconstruction formulas rely on single-layer potential integral representations to reconstruct the initial source from the measured boundary data. In \cite{Kuchment2025}, necessary and sufficient conditions were established for reconstructing the initial source from spherical means centered on the boundary when the boundary data are available only over a half-time interval. This analysis is based on a spherical-harmonic representation of the Kirchhoff--Helmholtz integral formula evaluated over the boundary and the measurement time interval.

All of the approaches discussed so far rely on a smoothness assumption on the boundary. This assumption is either applied to both the reception step of the forward operator and, equivalently, the emission step of the backprojection (or adjoint) operator, or it is imposed solely on the reception step of the forward operator, in which case the resulting discontinuity is treated as a single-layer potential in the formulation of the backprojection (or adjoint) operator. Accordingly, in defining the reception step, all these studies assume that a continuous field is extracted on the boundary via sampling. Recall that sampling on the boundary in a continuous domain is equivalent to interpolation from discrete sampled points onto the boundary in a discretized domain \cite{Huang2013}.

\subsection{Our Contributions}
This study extends our recently proposed forward--adjoint pair of operators for solving inverse problems in acoustics to the specific application of photoacoustic tomography \cite{Javaherian2022}. While the forward–adjoint operator proposed in \cite{Javaherian2022} is formulated in a general framework for acoustic inverse problems and accommodate both Neumann and Dirichlet boundary conditions, involving both emission and reception processes, the particular focus of this manuscript is on photoacoustic tomography.

Accordingly, the acoustic step of the forward problem in photoacoustic tomography is formulated as an operator that maps the initial source, whose support lies within a volumetric open spatial domain, to measured Dirichlet-type data on its boundary over time. The key component of the acoustic forward operator proposed in \cite{Javaherian2022} is a time-independent reception map from free space to the boundary. In \cite{Javaherian2022}, the derivation of this reception operator is presented in detail. It is shown that, at each arbitrary but fixed time, the reception operator conceptually acts as a left inverse of an emission operator that maps Dirichlet-type source data prescribed on the boundary into free space. In a distributional sense, the emission operator treats a boundary-restricted source field as a smeared singularity in its normal derivative across the boundary in free space. Conversely, the reception operator extracts the data restricted to the boundary as a smeared singularity in its normal derivative across the boundary in free space.  Defining the photoacoustic forward operator through this reception map leads to an adjoint operator equivalent to a time-reversed double-layer potential integral formula, yielding a formulation that is fully consistent with the fundamental principle of reciprocity in physics.

\section{The PAT Forward and Adjoint Operators}   \label{sec:pat-forward-adjoint}

This section presents the derivation of the mathematical formulations for the photoacoustic forward operator and its adjoint. Let $\Omega \subset \mathbb{R}^d$ be an open domain bounded by the observation boundary $\partial \Omega$, where the spatial dimension is denoted by $d \in \{2,3\}$. Accordingly, Cartesian coordinates are indexed by $\zeta \in \{1,\ldots,d\}$. Also, let $c(\bx)$ and $\rho_0(\bx)$ denote the sound speed and ambient density maps, respectively.

Short laser pulses are absorbed in a semitransparent tissue and generate an initial pressure distribution $p_0 \in C_0^\infty(\Omega)$, which is extended by zero to $\mathbb{R}^d$. Each receiver is indexed by $r$ and is defined as an infinite plane $\partial \nu_r$ that divides the full space into two half-spaces $\nu_r^{\pm}$. The receiver support is restricted to a finite surface lying on $\partial \Omega$, with $\Omega \subset \nu_r^{+}$.

Accordingly, the Dirichlet-type measured data associated with receiver \( r \) is denoted by \( y_r^D \in C_0^{\infty} \big( \partial \Omega \times [0,T] \big) \), and is defined as
    \begin{align} \label{eq:dirichlet-data}
        y_r^D(\bx_s, t) := 
        \begin{cases}
            \displaystyle p(\bx_s, t), & \text{if }\bx_s \in \partial \nu_r \cap \partial \Omega , \\[0.5em]
            0, & \text{otherwise},
        \end{cases}
    \end{align}
whose support is restricted to $\partial \nu_r \cap \partial \Omega$ and is extended by zero to the entirety of $\partial \Omega$. Note that we assume $y_r^D$ varies spatially across the receiver surface. In practice, such spatial variation can arise from the apodization or spatially varying sensitivity of the receiver aperture.

Below, we shall use the regularized Dirac distribution \(\delta_b\), which is defined by
\begin{align} \label{eq:delta-regularized}
\delta_b(\bx - \bx') 
= \prod_{\zeta=1}^d \frac{1}{b} \;
\mathrm{sinc}\!\left( \frac{\pi (x^\zeta - x'^\zeta)}{b} \right),
\end{align}
where \(b>0\) is a bandwidth parameter controlling the spread of the approximation (units: \(\mathrm{m}^{-d}\)). 
The sinc function is defined by
\[
\mathrm{sinc}(f) = \frac{\sin(f)}{f}, 
\qquad \mathrm{sinc}(0) = 1.
\]

\subsection{PAT Forward Operator}  \label{sec:pat-forward}

Consequently, the acoustic forward operator $\mathcal{A}$ is the map:
\begin{align}  \label{eq:operator-forward}
\begin{split}
\mathcal{A}: C_0^{\infty}(\mathbb{R}^d \times \mathbb{R}) &\rightarrow  C_0^{\infty}\left(\partial \Omega \times [0,T]\right) , \\
\mathcal{A}[s_{\mathcal{Q}}] &= y^D,
\end{split}
\end{align}
satisfying the wave system \cite{Javaherian2022}
\begin{align} \label{eq:wave-pat}
\begin{split}
 &\left[ \frac{1}{c^2}\frac{\partial^2}{\partial t^2} - \nabla \cdot \left( \frac{1}{\rho_0} \nabla \right)  \right] p(\bx, t) =  s_{\mathcal{Q}},  \quad (\bx,t) \in \mathbb{R}^d \times [0,T], \\
&y_r^D(\bx_s, t) = \mathcal{R}_{\mathcal{G},r}^D \big[p (\bx,t) \big], \quad  (\bx_s,t) \in \partial \Omega \times [0,T],
\end{split}
\end{align}
with initial conditions
\begin{align}  \label{eq:cauchy1}
    p(\bx, t) \big|_{t=0} = 0, \quad 
    \frac{\partial p}{\partial t} (\bx, t) \big|_{t=0} = 0.
\end{align}

Here, we have used the \emph{reception operator} \(\mathcal{R}_{\mathcal{G},r}^D\) associated with receiver \(r\), satisfying \cite{Javaherian2022}
\begin{align} \label{eq:reception}
\begin{split}
\mathcal{R}_{\mathcal{G},r}^D : C_0^{\infty}\!\left( \mathbb{R}^d \times [0,T] \right)
&\longrightarrow 
C_0^{\infty}\!\left( \partial \Omega \times [0,T] \right), \\
\lim_{b \to 0^+} \, \frac{2}{a_R(\bx_s)}\, f\big|_{\partial \nu_r} (\bx_s,t)
&=
\int_{\mathbb{R}^d}  \, d\bx  \, \delta_b(\bx_s - \bx)
\, \frac{\partial f}{\partial \bn_{r,+}}(\bx,t) 
\quad (\bx_s,t)  \in (\partial \nu_r \cap \partial \Omega) \times [0,T],
\end{split}
\end{align}
where \( f\big|_{\partial \nu_r}\) denotes the restriction of the field \(f\) to the infinite plane \(\partial \nu_r\), and \( \partial / \partial \bn_{r,+} \) denotes the outward normal derivative with respect to the half-space associated with the receiver, i.e., the normal vector pointing from \( \nu_r^{+} \) into \( \nu_r^{-} \).

The factor
\[
a_R(\bx_s) = \frac{d\bx_s}{dS(\bx_s)}
\]
is a geometric scaling term accounting for the relative infinitesimal size (Jacobian factor) of the free-space volumetric element with respect to the surface element on \(\partial\nu_r\).

The reception operator extracts the acoustic field on the receiver surface as a smeared singularity of its outward normal derivative across the receiver surface in free space, interpreted in a distributional sense \cite{Javaherian2022}.

The formulation above provides the most general definition of the operator. As a concrete example, we focus our study on the forward operator associated with acoustic wave propagation in photoacoustic tomography.
In this setting, the overall forward map is realized as the composition \( \mathcal{A}_r \circ \mathcal{Q} \). Here, 
\( \mathcal{Q} \) denotes the operator that converts the locally induced 
instantaneous pressure distribution \( p_0 \in C_0^\infty(\Omega) \) into a temporaly-regularized source \( s_{\mathcal{Q}} \in C_0^\infty(\mathbb{R}^d \times \mathbb{R}) \), i.e.,
\begin{align} \label{eq:photoacoustic-op}
    \begin{split}
        \mathcal{Q}: C_0^{\infty}(\Omega) &\rightarrow C_0^{\infty}(\mathbb{R}^d \times \mathbb{R}), \\
        \mathcal{Q}[p_0] &= s_{\mathcal{Q}},
    \end{split}
\end{align}
where \(s_{\mathcal{Q}}\) denotes the temporally-regularized photoacoustic source, and is defined by
\begin{align}   \label{eq:photoacoustic-source}
    s_{\mathcal{Q}}(\bx, t) = \frac{1}{c(\bx)^2}\, p_0(\bx)\, \frac{d}{d t}\delta_q(t).
\end{align}
Here, \(\delta_q(t)\) denotes a regularized temporal Dirac delta distribution centered at the time origin, which decays away from \(t=0\). In this work, we use the following simplified approximation:
\begin{align} \label{eq:dirac-t}
\delta_q(t) =
\begin{cases}
\dfrac{1}{2q}, & \text{if } , \mid t \mid  \leq q, \\
0, & \text{otherwise},
\end{cases}
\end{align}
where \(q>0\) is a small time-width parameter controlling the width of the pulse. \footnote{Note that the physics of the photoacoustic phenomenon is originally formulated as an initial value problem for a homogeneous wave equation with a vanishing right-hand side and nonvanishing, heterogeneous Cauchy conditions. However, it can equivalently be reformulated as the inhomogeneous wave equation given in the first line of Eq.~\eqref{eq:wave-pat}, equipped with vanishing Cauchy conditions~\eqref{eq:cauchy1}.}

\subsection{Minimization Problem} \label{sec:pat-minimization}
To regularize the singularity arising in the reception operator, we consider a scaled positivity-constrained least-squares minimization problem of the form
\begin{align} \label{eq:objective-function}
\argmin_{p_0 \geq 0}  \quad  \chi [p_0]:\left\| \frac{1}{a_R} \mathcal{A} \circ \mathcal{Q} [p_0] - \frac{1}{a_R} \bigl(y^D\bigr)_{\mathrm{measured}} \right\|_{ L^2 \big( \partial \Omega \times [0,T]\big) }^2,
\end{align}
where  $\chi$ is the objective function, $ \frac{1}{a_R} y^D$ denotes the scaled Dirichlet-type measured data on the boundary, and \(\frac{1}{a_R} \mathcal{A}\) denotes the scaled version of the forward operator defined by Eqs.~\eqref{eq:operator-forward}–\eqref{eq:reception}, where the factor \(a_R\) appearing in the reception operator \eqref{eq:reception} is cancelled by the scaling factor \(\frac{1}{a_R}\). Additionally, \(\mathcal{Q}\) represents the photoacoustic source operator defined by Eqs.~\eqref{eq:photoacoustic-op}–\eqref{eq:dirac-t}.

Here, we minimize the associated scaled objective function iteratively using the projected gradient descent algorithm:
\begin{align} \label{eq:iterate}
p_0^{n+1} = \max\left(p_0^n + \tau d^n,\, 0\right),
\end{align}
where \(d^n\) is the step direction at iteration \(n\), given by
\begin{align} \label{eq:step-direction}
d^n = - 2 \mathcal{Q}^* \circ \left(\frac{1}{a_R}\mathcal{A}\right)^* 
\left[
\frac{1}{a_R}\mathcal{A} \circ \mathcal{Q}  \mathcal[p_0^n]
-
\frac{1}{a_R}\bigl(y^D\bigr)_{\mathrm{measured}}
\right],
\end{align}
and scaled by the fixed step length \(\tau\).

The minimization is performed iteratively using the updates in~\eqref{eq:iterate} until the iteration index \(n\) for which
\begin{align} \label{eq:termination}
    \frac{\left\|p_0^{n} - p_0^{n-1}\right\|_2}{\|p_0^{\,n-1}\|_2} < \eta,
\end{align}
is satisfied, where \(  \eta \) is a small tolerance parameter. The step directions given by Eq.~\eqref{eq:iterate} depend on the full composite PAT forward operator and its adjoint formulation. Having described the minimization procedure, we derive the full PAT adjoint operator in the next section.

\subsection{PAT Adjoint Operator} \label{sec:pat-adjoint}

In Sections~\ref{sec:pat-forward} and~\ref{sec:pat-minimization}, we defined the photoacoustic forward operator as the scaled composite map \( \frac{1}{a_R}\mathcal{A} \circ \mathcal{Q} \), acting on the initial pressure distribution \( p_0 \in C_0^\infty(\Omega) \) to produce the scaled measured Dirichlet-type boundary data, i.e., \( \frac{1}{a_R} y^D \in C_0^\infty(\partial \Omega \times [0,T]) \). Correspondingly, the adjoint of the photoacoustic forward operator is given by the composite map \( \mathcal{Q}^* \circ \left( \frac{1}{a_R} \mathcal{A} \right)^* \), acting on the scaled Dirichlet-type
boundary data \( \frac{1}{a_R} y_r^D \). Accordingly, the adjoint of the photoacoustic source operator \(\mathcal{Q}\) is derived in the following lemma.

\begin{lemma}
 The action of the adjoint of the operator \(\mathcal{Q} \), defined by Eq.~\eqref{eq:photoacoustic-op}, on any test function 
    \( f^{s_{\mathcal{Q}}} \in C_0^\infty(\mathbb{R}^d \times \mathbb{R}) \) is given by
    \begin{align}  \label{eq:adjoint-photoacoustic}
        \begin{split}
        &\mathcal{Q}^*: C_0^\infty(\mathbb{R}^d \times \mathbb{R}) 
        \rightarrow C_0^\infty(\Omega), \\
        &\mathcal{Q}^*[f^{s_{\mathcal{Q}}}]
        = - \frac{1}{c(\bx)^2}\,  \,   \delta_q(t) \, \frac{\partial  f^{s_{\mathcal{Q}} } }{\partial t}
       (\bx,t) \rvert_{t=0}.
       \end{split}
    \end{align}
\end{lemma}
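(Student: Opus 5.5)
The plan is to compute $\mathcal{Q}^*$ directly from the defining duality relation
\[
\big\langle \mathcal{Q}[p_0], f^{s_{\mathcal{Q}}} \big\rangle_{L^2(\mathbb{R}^d\times\mathbb{R})} = \big\langle p_0, \mathcal{Q}^*[f^{s_{\mathcal{Q}}}] \big\rangle_{L^2(\Omega)},
\]
which must hold for all $p_0\in C_0^\infty(\Omega)$ and $f^{s_{\mathcal{Q}}}\in C_0^\infty(\mathbb{R}^d\times\mathbb{R})$. No earlier result is needed beyond the definitions \eqref{eq:photoacoustic-source} and \eqref{eq:dirac-t}, because $\mathcal{Q}$ is a multiplication in space tensored with a fixed temporal factor.

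First, I insert \eqref{eq:photoacoustic-source} into the left-hand side. Since $p_0$ vanishes outside $\Omega$, the spatial integral reduces to $\Omega$. Both factors are compactly supported, so Fubini applies and I can separate the integrals:
\[
\int_\Omega d\bx\, p_0(\bx)\,\frac{1}{c(\bx)^2}\int_{\mathbb{R}} dt\, \frac{d\delta_q}{dt}(t)\, f^{s_{\mathcal{Q}}}(\bx,t).
\]
Comparing with the right-hand side then identifies $\mathcal{Q}^*[f^{s_{\mathcal{Q}}}](\bx)$ as $c(\bx)^{-2}$ times the inner time integral.

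Second, I integrate by parts in $t$ to move the derivative from $\delta_q$ onto $f^{s_{\mathcal{Q}}}$. The boundary terms vanish because $\delta_q$ has compact support $[-q,q]$. This gives
\[
\int_{\mathbb{R}} \frac{d\delta_q}{dt}\, f^{s_{\mathcal{Q}}}\,dt = -\int_{\mathbb{R}} \delta_q(t)\,\frac{\partial f^{s_{\mathcal{Q}}}}{\partial t}(\bx,t)\,dt,
\]
and this is where the minus sign in \eqref{eq:adjoint-photoacoustic} comes from. The formula $-c^{-2}\,\delta_q(t)\,\partial_t f^{s_{\mathcal{Q}}}|_{t=0}$ is then read as the $\delta_q$-weighted time average of $\partial_t f^{s_{\mathcal{Q}}}$ concentrated at $t=0$. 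Since $\int\delta_q\,dt=1$ and $f^{s_{\mathcal{Q}}}$ is smooth, this average converges to $-c^{-2}\,\partial_t f^{s_{\mathcal{Q}}}(\bx,0)$ as $q\to0^+$, with error $O(q^2)$ by a Taylor expansion of $\partial_t f^{s_{\mathcal{Q}}}$ about $t=0$ (the odd-order term cancels by the symmetry of the box). Finally, I restrict the result to $\Omega$ to land in the stated codomain.

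The main obstacle is rigor rather than computation: the box function \eqref{eq:dirac-t} is not smooth, so $d\delta_q/dt$ exists only distributionally, as $\frac{1}{2q}\big(\delta(t+q)-\delta(t-q)\big)$. I will justify the integration by parts in the distributional sense, which is legitimate since $f^{s_{\mathcal{Q}}}$ is a smooth test function. As a cross-check, both sides equal $-\frac{1}{2q}\big(f^{s_{\mathcal{Q}}}(\bx,q)-f^{s_{\mathcal{Q}}}(\bx,-q)\big)$ exactly. A related caveat is that $s_{\mathcal{Q}}$ is then not literally in $C_0^\infty$ in time. I would treat this either by mollifying $\delta_q$ or by understanding the mapping spaces distributionally, and in either case the final formula is unchanged.
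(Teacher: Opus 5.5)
Your proposal is correct and is essentially the paper's argument: you write the duality relation, substitute $s_{\mathcal{Q}}=c^{-2}p_0\,\frac{d}{dt}\delta_q$, and integrate by parts in time so that the derivative falls on $f^{s_{\mathcal{Q}}}$ with a minus sign. The one difference is bookkeeping: the paper keeps an extra $\int_{\mathbb{R}}dt$ on the $\Omega$-side of its pairing and identifies the time-dependent integrand $-c^{-2}\delta_q(t)\,\partial_t f^{s_{\mathcal{Q}}}(\bx,t)$ itself as $\mathcal{Q}^*[f^{s_{\mathcal{Q}}}]$, whereas you fold that time integral into $\mathcal{Q}^*$ and read ``$|_{t=0}$'' as the $\delta_q$-weighted average. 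The two agree after integrating in time, and your reading is the one consistent with a codomain of functions on $\Omega$ (a literal pointwise evaluation at $t=0$ would carry an extra factor $\delta_q(0)=1/(2q)$), while your distributional treatment of the box derivative fills in a point the paper passes over.
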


\begin{proof}
The adjoint operator \( \mathcal{Q}\) and its adjoint, with respect to the standard 
\(L^2\) bilinear form in  \(C_0^\infty(\Omega)\) and \(C_0^\infty(\mathbb{R}^d \times \mathbb{R})\), should satisfy
\begin{align} \label{eq:photoacoustic-adj-1}
        \int_{\mathbb{R}^d} d\bx \, \int_{\mathbb{R}} dt \,
        f^{s_{\mathcal{Q}}}(\bx,t)  \,
        \frac{1}{c(\bx)^2} f_0(\bx)  \,  \frac{d}{dt}  \delta_q(t)
        = \int_{\Omega} d\bx \, \int_{\mathbb{R}} dt \, \mathcal{Q}^*[f^{s_{\mathcal{Q}}}]  \, f_0(\bx)
\end{align}
for any  \( f_0 \in C_0^\infty(\Omega)\)~and~\(f^{s_{\mathcal{Q}}} \in C_0^\infty(\mathbb{R}^d \times \mathbb{R})\).

Applying integration by parts in time to the left-hand side of 
\eqref{eq:photoacoustic-adj-1} yields
\begin{align} \label{eq:photoacoustic-adj-2}
    \int_{\Omega} \, d\bx \,   \int_{\mathbb{R}}   dt  \,  \Big[ - \frac{1}{ c(\bx)^2} 
    \delta_q(t) \
    \frac{\partial f^{s_{\mathcal{Q}}} }{\partial t} (\bx,t)  \Big]
    f(\bx),
\end{align}
where we have also restricted the domain of spatial integration to the support of \(f\).

Comparing the integral expression in \eqref{eq:photoacoustic-adj-2} with the right-hand side of Eq.~\eqref{eq:photoacoustic-adj-1} immediately identifies the bracketed term in \eqref{eq:photoacoustic-adj-2} as \(\mathcal{Q}^*[f^{s_{\mathcal{Q}}}]\), which proves the expression in \eqref{eq:adjoint-photoacoustic}.
\end{proof}

Additionally, the adjoint of the scaled acoustic forward operator \( \frac{1}{a_R} \mathcal{A} \) was derived in \cite{Javaherian2022}, and we refer the reader to that study for further details. Correspondingly, the action of the adjoint of the scaled forward operator \( \frac{1}{a_R} \mathcal{A} \), introduced in Eq.~\eqref{eq:operator-forward}-\eqref{eq:reception}, on any scaled Dirichlet-type boundary data \(\frac{1}{a_R} f^D \in C_0^{\infty}(\partial \Omega \times [0,T])\) is given by the adjoint wave equation:

\begin{align}  \label{eq:wave-adjoint}
\left[
\frac{1}{c^2}\frac{\partial^2}{\partial t'^2}
-
\rho_0 \nabla_{\bx'} \cdot
\left(
\frac{1}{\rho_0}\nabla_{\bx'}
\right)
\right]
p^*(\bx',t')
=
\frac{1}{2}  \sum_r
\int_{\partial \nu_r \cap \partial \Omega}
\, dS(\bx_s)  \,
\nabla \delta_b(\bx'-\bx_s)
\cdot
\left[
f_r^D(\bx_s,T-t')\,\bn_{r,-}
\right]
\end{align}
subject to the Cauchy conditions
\begin{align}   \label{eq:cauchy-adjoint}
p^*(\bx',0)=0,
\qquad
\frac{\partial p^*}{\partial t'}(\bx',0)=0,
\qquad
\bx' \in \mathbb{R}^d .
\end{align}
Here, \(p^*\) denotes the free space--time adjoint wavefield, i.e., the solution of the adjoint wave equation, and \(f_r^D\) denotes the component of the Dirichlet-type boundary data \(f^D\) whose support is confined to \( \partial \nu_r \cap \partial \Omega \), and which is extended by zero to \(\partial \Omega\) (cf. Eq.~\eqref{eq:dirichlet-data}). Moreover, \(\bn_{r,-}\) denotes the inward-directed normal vector to the receiver surface \(r\). (The reader is referred to \cite{Javaherian2022} for the proof.)

Correspondingly, the full composite PAT adjoint operator is defined as the map
\begin{align}  \label{eq:full-operator-adjoint}
\begin{split}
\mathcal{Q}^* \circ \left(\frac{1}{a_R}\mathcal{A}\right)^*         & :  C_0^{\infty}\left(\partial \Omega \times [0,T]\right)  \rightarrow  C_0^{\infty}(\Omega),\\
\mathcal{Q}^* \circ \left(\frac{1}{a_R}\mathcal{A}\right)^* [\frac{1}{a_R} f^D]& : =   \frac{1}{c(\bx')^2}  \, \delta_q(t') \ \frac{\partial p_r^* }{\partial t}(\bx', T-t') \rvert_{t'=0},
\end{split}
\end{align}
where \(p^*\) satisfies the adjoint wave equation~\eqref{eq:wave-adjoint}, together with the Cauchy conditions~\eqref{eq:cauchy-adjoint}.

Consequently, the full composite forward and adjoint operators should satisfy
\begin{align} \label{eq:inner-product-test}
\int_{\partial \Omega} \, d \bx_s \,  \int_0^T  \, dt \, \frac{1}{a_R} y^D(\bx_s,t) \left(  \frac{1}{a_R} \mathcal{A} \right) \circ \mathcal{Q} [p_0]  = \int_{\Omega} \, d \bx   \,  \int_{\mathbb{R}} \,  dt \ \mathcal{Q}^*\circ  \left(  \frac{1}{a_R} \mathcal{A} \right)^* \big[ \frac{1}{a_R} y^D\big] p_0(\bx),
\end{align}
for any \( p_0 \in c_0^\infty\left(\Omega\right) \) and \( \frac{1}{a_R} y^D \in C_0^\infty \left(\partial \Omega \times [0,T]\right) \).

\subsection{System of Three-Coupled First-Order Wave Equations}

We employ a three-coupled first-order linear formulation of the forward and adjoint operators defined above. The corresponding wave system is defined in terms of the vector‑valued particle velocity field \(\bu\) and the scalar‑valued acoustic density \(\rho\) and pressure \(p\) fields. Accordingly, the free-space wave equation in the first line of Eq.~\eqref{eq:wave-pat} is reformulated as the following first-order wave system:
\begin{align}
\label{eq:wave-system}
\begin{split}
    & \frac{\partial}{\partial t} \bu(\bx, t) = -\frac{1}{\rho_0(\bx)} \nabla p(\bx, t) + \boldsymbol{\mathcal{S}}_f(\bx, t), \\
    & \frac{\partial}{\partial t} \rho(\bx, t) = -\rho_0(\bx) \nabla \cdot \bu(\bx, t) + s_m(\bx, t), \\
    & p(\bx, t) = c(\bx)^2 \rho(\bx, t),
\end{split}
\end{align}
where \(s_m\) denotes a scalar mass source and \(\boldsymbol{\mathcal{S}}_f\) denotes a vector-valued force source that is regularized in space, as detailed below.

\subsubsection{Scaled Full Composite PAT Forward Operator}

Correspondingly, the forward operator defined in Section~\ref{sec:pat-forward} is obtained by setting\cite{Javaherian2022}
\begin{align}  \label{eq:mass-source}
    s_m(\bx, t)
    = \int_{-\infty}^t dt'\, s_{\mathcal{Q}}(\bx, t')
    = \frac{1}{c(\bx)^2}\,p_0(\bx)\,\delta_q(t),
    \qquad
    \boldsymbol{\mathcal{S}}_f = 0.
\end{align}

The wave system in Eq.~\eqref{eq:wave-system} yields the free-space pressure wavefield, which is subsequently mapped onto each receiver surface \(r\) using the reception operator \(\mathcal{R}_{\mathcal{G},r}^D\), introduced in Eq.~\eqref{eq:reception}, to obtain the scaled Dirichlet-type boundary data \(\frac{1}{a_R}y_r^D\).

\subsubsection{Scaled Full Composite PAT Adjoint Operator}

Similarly, the scaled adjoint operator defined in Section~\ref{sec:pat-adjoint} is obtained by the settings \cite{Javaherian2022}
\begin{align}
\boldsymbol{\mathcal{S}}_f(\bx, t)
=
- \frac{1}{2 \rho_0(\bx)} \, \sum_r
\int_{\partial\nu_r \cap \partial\Omega}
dS(\bx_s)\,
\delta_b(\bx - \bx_s)\,
\left[p(\bx_s, T - t)\,\bn_{r,-} \right],
\qquad
s_m = 0.
\end{align}

The output of the full PAT adjoint operator is then obtained from the time-reversed free-space pressure wavefield through the second line of Eq.~\eqref{eq:full-operator-adjoint}.

\subsection{Discretized Algorithm}

This section outlines a discretization of the forward and adjoint operators defined above. We use a regular grid with sampled points \( \mathbf{X} = \{X^\zeta : \zeta \in \{1,\ldots,d\}\} \), where \( \zeta \) indexes the Cartesian coordinates. A uniform grid spacing \( \Delta x \) is used for all Cartesian coordinates \( \zeta \), without loss of generality. Each sampled point is indexed by \( i \in \{1,\ldots,N_i\} \). Furthermore, let \( \bt \in \{0,\ldots,N_t\} \) denote the discrete time indices corresponding to the measurement period \( t \in [0,T] \). The time spacing is denoted by \(\Delta t\).

Each receiver surface is discretized using a triangulation and divided into finite elements \(K \in \{1,\ldots,N_K\}\), which are triangles for \(d=3\) and line segments for \(d=2\), with area or length \(s_K\), respectively. Here, \(N_K\) denotes the total number of finite elements across all disjoint receivers. The nodes are located at points \(\bx_j\), with \(j \in \{1,\ldots,N_j\}\), where \(N_j\) is the total number of nodes across all disjoint receivers. Each finite element \(K\) is associated with a local set of node indices \( l(K) \subseteq \{1,\ldots,N_j\} \). The set \( l(K) \) identifies the \( N_{l(K)} \) nodes that define the vertices of element \(K\) (\(N_{l(K)} = d\)). These nodes are used for interpolation and quadrature on the finite element \(K\). Accordingly, each receiver \(r\) is discretized into \(N_{j,r}\) nodes, so that the total number of nodes across all receivers is \(N_j = \sum_r N_{j,r}\). Likewise, each receiver \(r\) is discretized into \(N_{K,r}\) finite elements, giving a total of \(N_K = \sum_r N_{K,r}\) finite elements across all receivers.

In the numerical experiments presented in this manuscript, we assume \( d = 2 \). For this setting, each receiver is modeled as a line segment composed of finite elements
corresponding to connected line subsegments, such that \( N_{K,r} = N_{j,r} - 1 \). Additionally, for simplicity, we assume that \(N_{j,r}\) and \(N_{K,r}\) are fixed for all receiver indices \(r\).

Bar notations are used to denote fields defined on the fully discretized domain. The discretizations of a scaled form of the PAT forward operator introduced in Section~\ref{sec:pat-forward}, acting on the initial pressure distribution \(p_0\), and the scaled PAT adjoint operator introduced in Section~\ref{sec:pat-adjoint}, acting on the scaled Dirichlet-type boundary data \(\frac{1}{a_R} y^D\), are outlined in Algorithm~\ref{alg:1}. 

In this algorithm, the discretization is performed on a grid staggered in both space and time. The directional spatial derivatives are approximated using a k-space pseudospectral method \cite{Tabei2002,Treeby2010}. While the wave system given in Eq.~\eqref{eq:wave-pat} is defined in free space, the discretized computational grid has a finite size. To model the propagation of the pressure wavefield in free space, absorbing boundary conditions of the Perfectly Matched Layer (PML) type are employed to prevent waves leaving one side of the computational grid from reappearing on the opposite side. Accordingly, we use a direction-dependent Perfectly Matched Layer (PML) operator \(\Lambda^{\zeta}\) satisfying
\begin{align}
\Lambda^{\zeta} = e^{-\alpha^{\zeta} \Delta t / 2},
\end{align}
where \(\alpha^{\zeta}\) is the virtual absorption coefficient of the PML layer along the Cartesian coordinate \(\zeta\).

\begin{algorithm}
    \caption{Full-discretization at time step \( \bt \in \{-1, \ldots, N_t-1\} \)}
    \label{alg:1}
    \begin{algorithmic}[1]
        \State \textbf{Input:} \( \bar{c}, \bar{\rho}_0, \Delta t, \Lambda^\zeta, \bar{\mathcal{S}}_f^{\zeta}(\bX, \bt) \ (\zeta \in \{1, \ldots, d\}), \bar{\mathcal{S}}_m(\bX, \bt+\frac{1}{2})  \)
        \State \textbf{Initialize:} \( \bar{p}(\bX, -1) = 0, \ \bar{\bro}(\bX, -1) = 0, \ \bar{\bu}(\bX, -\frac{3}{2}) = 0 \) \Comment{Set Cauchy conditions} 
        \For{\( \bt = -1, \ldots, N_t-1 \)}
            \State \( \bar{u}^\zeta (\bX,\bt + \frac{1}{2})  \gets \Lambda^\zeta \ \Big[\Lambda^\zeta \bar{u}^\zeta(\bX, \bt -\frac{1}{2}) - \Delta t \frac{1}{\bar{\rho}_0(\bX)} \frac{\partial}{\partial \zeta} \bar{p}(\bX, \bt)\Big] 
            + \Delta t \bar{\mathcal{S}}_f^{\zeta} (\bX, \bt) \) \Comment{Update \( \bu \)}
            \State \( \bar{\rho}^\zeta(\bX, \bt+1)  \gets \Lambda^\zeta \Big[\Lambda^\zeta \bar{\rho}^\zeta(\bX, \bt) - \Delta t \bar{\rho}_0(\bX) \frac{\partial}{\partial \zeta} \bar{u}^\zeta(\bX, \bt+\frac{1}{2})\Big] + \Delta t \bar{\mathcal{S}}_m^\zeta (\bX, \bt+\frac{1}{2}) \) \Comment{Update \( \bro \)}
            \State \( \bar{p}(\bX, \bt+1) \gets \bar{c}(\bX)^2 \sum_{\zeta = 1}^d \bar{\rho}^\zeta (\bX, \bt+1) \) \Comment{Update \( p \)}
        \EndFor
    \end{algorithmic}
\end{algorithm}

\subsubsection{Discretized Scaled PAT Forward Operator}  \label{sec:forward-dis}

The discretized form of the PAT forward operator introduced in Section~\ref{sec:pat-forward} is implemented using Algorithm~\ref{alg:1} and employs, for all Cartesian coordinates \( \zeta \), the following sources:
\begin{align}
\bar{\mathcal{S}}_f^{\zeta} =0, \quad \bar{\mathcal{S}}_m^\zeta \left(\bX, \bt+\frac{1}{2} \right):=\frac{1}{2 \bar{c}(\bX)^2 d \Delta t}
\begin{split}
\begin{cases}
\mathfrak{S}\bar{p_0},  & \mathfrak{t} \in \{-1,0\} \\
0, & \text{Otherwise},
\end{cases}
\end{split}
\end{align}
which is a discretization of the mass source defined by Eq.~\eqref{eq:mass-source}, with the setting \(q = \Delta t\). Here, \( \mathfrak{S} \) is a self-adjoint smoothing operator enforced on the initial pressure distribution \( \bar{p}_0 \) to mitigate unintended oscillations in the numerical solution arising from the high spatial frequencies of  \( \bar{p}_0 \) \cite{Arridge2016}. Note that while the discretized acoustic density field $\bar{\rho}$ and the discretized mass source $\bar{\mathcal{S}}_m$ are scalars, they are implicitly divided by the Cartesian coordinates to accommodate the direction-dependency of the PMLs.

The reception step of the discretized scaled forward operator is then given by
 \begin{align}  \label{eq:forward-reception-dis}
\frac{1}{a_R} \bar{y}^D\left(\bx_j, \mathfrak{t}\right) := \frac{1}{a_R} \bar{\mathcal{A}} \circ \bar{\mathcal{Q}} [\bar{p_0}] (\bx_j, \mathfrak{t} ) \approx \frac{1}{2} \sum_{i,\bar{\delta}_b(\bX_i- \bx_j) > \epsilon} \bar{\delta_b}\left( \bx_j -\bX_i  \right) \bar{\frac{\partial p}{\partial \bn_{r,+}}} \left( \bX_i , \bt \right)    \quad \text{for all} \quad  \bt \in \{0, \ldots, N_t\},
\end{align} 
which is computed and aggregated for all receiver indices \(r\). Recall that the bar notation in \( \bar{\frac{\partial p}{\partial \bn_{r,+}}} \) denotes the discretization of the pressure derivative in the outward normal direction to the surface of receiver \(r\). Note that the values of \(\bar{\delta}_b\) smaller than a prescribed threshold \(\epsilon\) have been set to zero so that the regularized Dirac distribution is confined to its effective support, thereby reducing the computational cost. Additionally, the setting \(b = \Delta x\) for the Dirac delta regularization is a standard practice that aligns the regularization kernel with the underlying grid resolution.

\subsubsection{Discretized Scaled PAT Adjoint Operator}  \label{sec:adjoint-dis}

The discretized form of the scaled PAT adjoint operator introduced in Section~\ref{sec:pat-adjoint} is implemented according to Algorithm~\ref{alg:1}, with an additional emission step:
\begin{align} \label{eq:adjoint-emission-dis}
\begin{split}
   \bar{\boldsymbol{\mathcal{S}}}_f (\bX_i, \bt) \approx -\frac{1}{2 \bar{\rho}_0(\bX_i)} \sum_{K=1}^{N_K} \frac{s_K}{N_{l(K)}}
   \sum_{\substack{ j \mid j \in l(K), \bar{\delta}_b(\bX_i- \bx_j) > \epsilon}}
   \bar{\delta}_b(\bX_i - \bx_j)  
   \left[\bar{p}(\bx_j, N_t- \bt-1) \bn_{r,-} \right], 
   \qquad 
   \bar{\mathcal{S}}_m^\zeta = 0,\\
   \text{for all} \quad  \bt \in \{-1, \ldots, N_t-1\},
\end{split}
\end{align}
where the contributions have been summed over all receiver indices \(r\). The output of Algorithm~\ref{alg:1} is then given by
\begin{align}  \label{eq:source-adjoint-dis}
\bar{\mathcal{Q}}^* \circ \left(\frac{1}{a_R} \bar{\mathcal{A}}\right)^* [ \frac{1}{a_R} \bar{y}^D](\bX)
= \mathfrak{S} \,  \frac{1}{2 \bar{c}(\bX)^2}  \,\frac{ \bar{p}(\bX,N_t) - \bar{p}(\bX,N_t-2)}{(\Delta t)^2},
\end{align}
where the smoothing operator \( \mathfrak{S} \) is self-adjoint.  Recall that we used \(b=\Delta x\)~and~\(q=\Delta t\) in the discretized formulas.

\section{Numerical Results}
This section presents numerical results demonstrating the accuracy of the derived forward--adjoint operator pair and their performance in a photoacoustic scenario. The simulations in free space were performed using the k-Wave toolbox, which employs a k-space pseudo-spectral method for numerical integration in space and time \cite{Treeby2010}, while the mappings between the free space and the boundary, in both directions, were carried out using the discretized formulas presented in Sections~\ref{sec:forward-dis} and~\ref{sec:adjoint-dis}.

\subsection{Verification of the Mathematical Consistency of the Forward--Adjoint Operator}

In this section, the accuracy of the derived PAT forward--adjoint operator pair is evaluated with respect the inner product indentity given in Eq.~\eqref{eq:inner-product-test}. Here, \( p_0 \in C_0^\infty(\Omega) \) and \( \frac{1}{a_R} y^D \in C_0^\infty(\partial\Omega \times [0,T]) \) are randomly generated. 

The accuracy of the derived adjoint operator is quantified by
\begin{align} \label{eq:inner-product-test-measure}
RD(\%) = \frac{\left\| LHS - RHS \right\|_2}{\left\| LHS \right\|_2} \times 100,
\end{align}
where \(LHS\) and \(RHS\) denote the left-hand side and right-hand side of the inner product test formula~\eqref{eq:inner-product-test}, respectively.

The computational grid consists of 296 sampled points along each Cartesian coordinate, of which 20 points on each side are assigned to PMLs. Accordingly, 256 sampled points lie within the domain \([-51.2, 50.8]\,\mathrm{mm}\) with a grid spacing of \(0.4\,\mathrm{mm}\) along each Cartesian coordinate. The pressure wavefield is simulated and recorded over the time interval \([0, 96.48]\,\mu\mathrm{sec}\) with a temporal spacing of \(0.08\,\mu\mathrm{sec}\), resulting in \(1207\) time steps. This setup corresponds to the parameter values \(d = 2\), \(N_i = 256^2\), \(\Delta x = 0.4\,\mathrm{mm}\), \(T = 96.48\,\mu\mathrm{sec}\), \(\Delta t = 0.08\,\mu\mathrm{sec}\), and \(N_t = 1207\).

The initial pressure source \(p_0\) is defined as a circular disc with a radius of \(36\,\mathrm{mm}\), with values ranging within \([0,1]\,\mathrm{Pa}\). We set up the following two experiments to quantify the consistency of the derived forward--adjoint PAT operators in satisfying the inner product test formula~\eqref{eq:inner-product-test}.

\subsubsection{On-Grid Line Receivers}

The receivers are modeled as line receivers aligned with grid sampling points on the left, bottom, and right sides of the domain, specifically at the second sampled points from each PML boundary. Each receiver is represented as a single line segment connecting two neighboring sampled points along the second row or column from the corresponding PML boundary. (We set $N_{j,r}=2,\; N_{K,r}=1$.) In this particular experiment, the receiver nodes coincide with the sampled grid points. Therefore, in the formulas \eqref{eq:forward-reception-dis} and \eqref{eq:adjoint-emission-dis}, used in the discretized forward and adjoint operators, respectively, the discretized form of the regularized Dirac distribution defined in Eq.~\eqref{eq:delta-regularized} is replaced by the exact Dirac distribution.

Accordingly, the initial pressure source \(p_0\) and the scaled Dirichlet-type boundary data \(\frac{y^D}{a_R}\) were randomly generated ten times in order to evaluate the accuracy of the inner product test formula~\eqref{eq:inner-product-test} using the quantification formula~\eqref{eq:inner-product-test-measure}. This resulted in an average RD value of \(9.32 \times 10^{-4}\%\), demonstrating the consistency of the derived forward--adjoint PAT operators in satisfying the inner product relation.

Figure~\ref{fig:1a} shows an example of a randomly generated initial pressure source \(p_0\). In this figure, the on-grid receivers on three sides of the computational grid are indicated in white. Figure~\ref{fig:1b} shows an example of the randomly generated, scaled Dirichlet-type boundary data $\frac{y^{D}}{a_{R}}$, where the horizontal and vertical axes correspond to time and receiver indices, respectively.

\begin{figure} 
\centering
\subfigure[]{\includegraphics[width=0.45\textwidth]{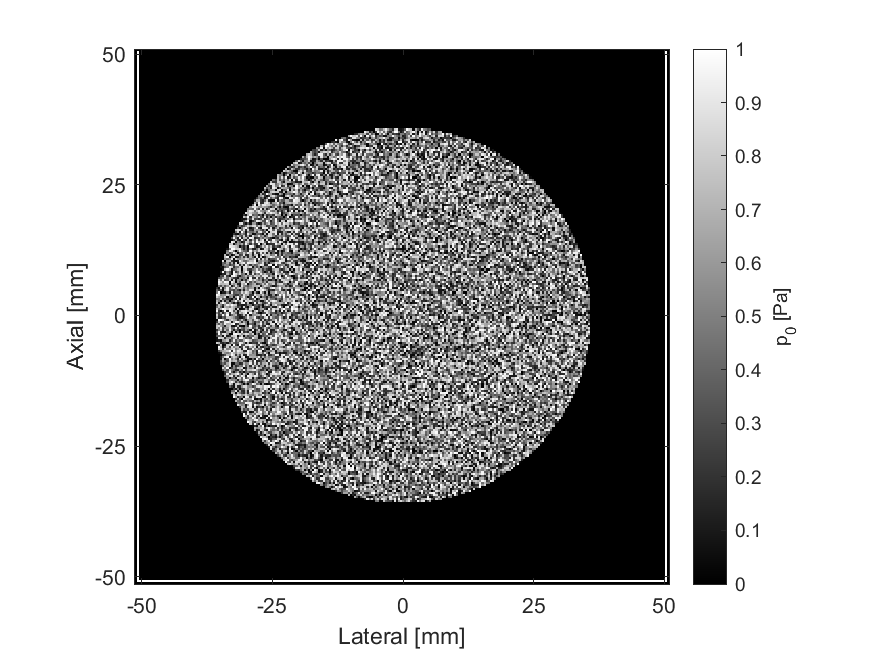}
\label{fig:1a}  }
\subfigure[]{\includegraphics[width=0.45\textwidth]{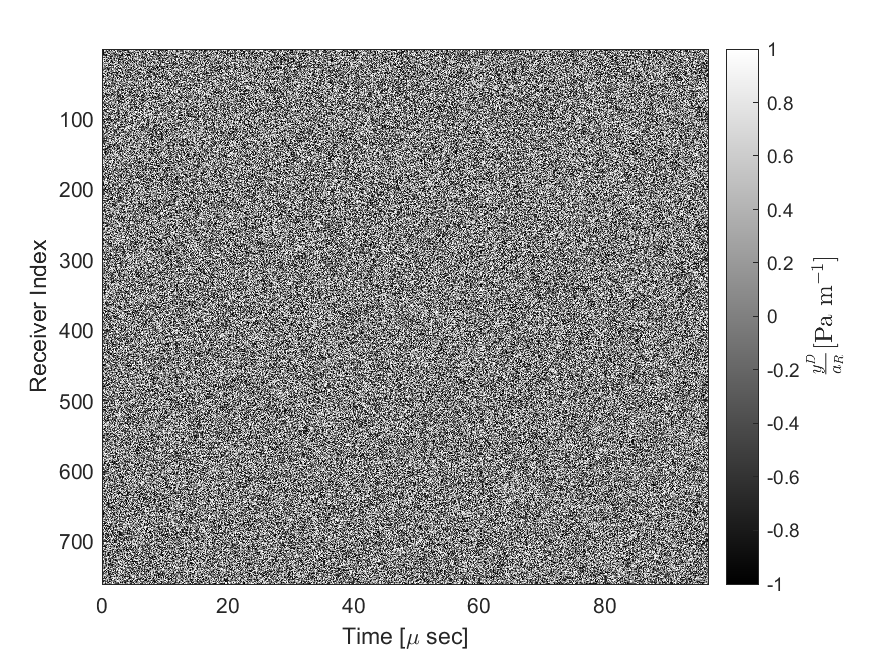}
\label{fig:1b} }
\caption{Experimental setup to quantify the consistency of the PAT forward--adjoint operator with respect to the inner product idenitity~\eqref{eq:inner-product-test} using on-grid line receivers. (a) Randomly generated initial pressure source distribution \(p_0\) and the on-grid line receivers on the left, bottom, and right sides of the grid. (b) Randomly generated scaled Dirichlet-type boundary data \(\frac{y^D}{a_R}\).} \label{fig:1}
\end{figure}

\subsubsection{Off-Grid Line Receivers}

A total of 64 receivers are modeled as line receivers with a half-length of \(2\,\mathrm{mm}\), whose centers are positioned on a circle with a radius of \(45\,\mathrm{mm}\). A dense sampling along each receiver is used by setting \((N_{j,r}=40,\; N_{K,r}=39)\). A 3D realization of this configuration may correspond to disc receivers with a radius of \(2\,\mathrm{mm}\), whose centers are positioned on a hemisphere with a radius of \(45\,\mathrm{mm}\). In the discretized formulas~\eqref{eq:forward-reception-dis} and~\eqref{eq:adjoint-emission-dis}, used in the forward and adjoint operators, respectively, a discretized form of the regularized Dirac distribution defined in Eq.~\eqref{eq:delta-regularized} is employed. To reduce the computational cost, values smaller than \(\frac{0.01}{b^d}\) (with \(d=2\)) are set to zero, thereby excluding sampled points that are far from the receiver nodes in the approximation. Figure~\ref{fig:2c} shows a binary mask indicating the sampled points that contribute to the approximation of the Dirac delta distributions associated with all receiver nodes.

Correspondingly, the initial pressure source \(p_0\) and the scaled Dirichlet-type boundary data \(\frac{y^D}{a_R}\) were randomly generated ten times to quantify the accuracy of the inner product test formula~\eqref{eq:inner-product-test} using the measure defined in~\eqref{eq:inner-product-test-measure}. This resulted in an average RD value of \(2.07 \times 10^{-5}\%\). The result demonstrates the consistency of the derived forward--adjoint PAT operators in satisfying the inner product formula.

Figure~\ref{fig:1a} shows an example of a randomly generated initial pressure source \(p_0\). The off-grid line receivers on the circular array are indicated in the figure. Figure~\ref{fig:1b} shows an example of the randomly generated scaled Dirichlet-type boundary data \(\frac{y^D}{a_R}\), where the horizontal and vertical axes correspond to time and receiver indices, respectively.

\begin{figure} 
\centering
\subfigure[]{\includegraphics[width=0.45\textwidth]{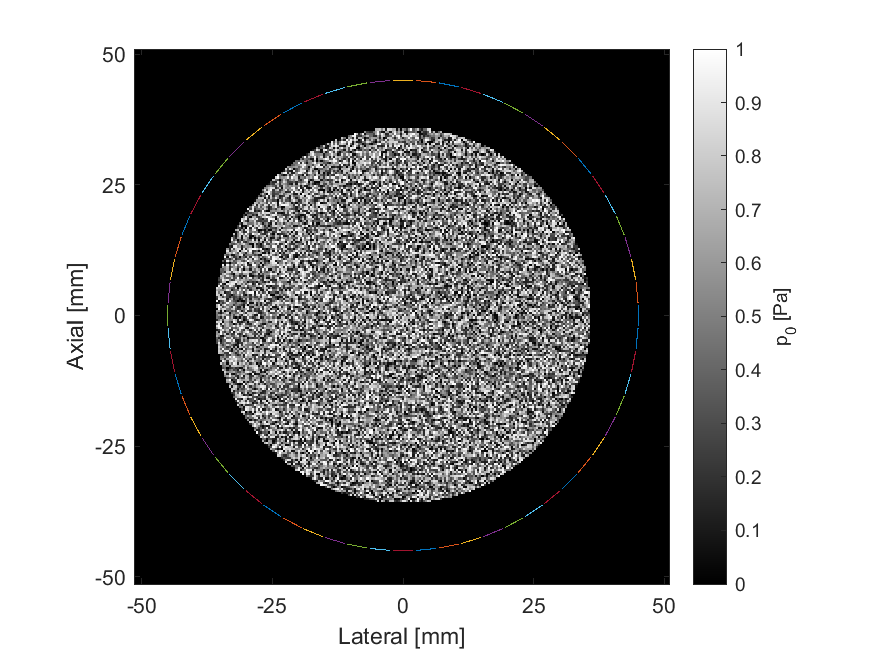}
\label{fig:2a}  }
\subfigure[]{\includegraphics[width=0.45\textwidth]{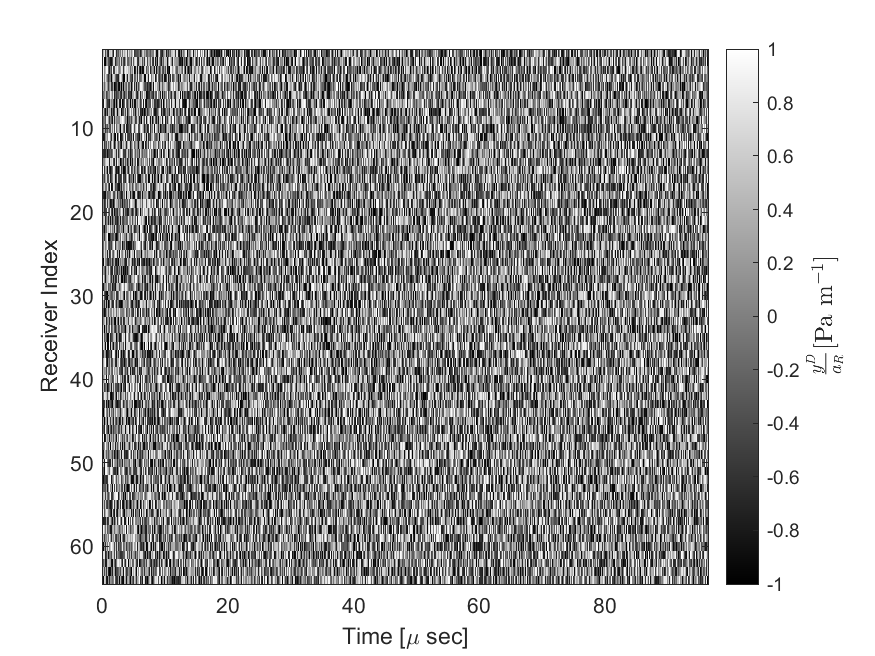}
\label{fig:2b} }
\subfigure[]{\includegraphics[width=0.45\textwidth]{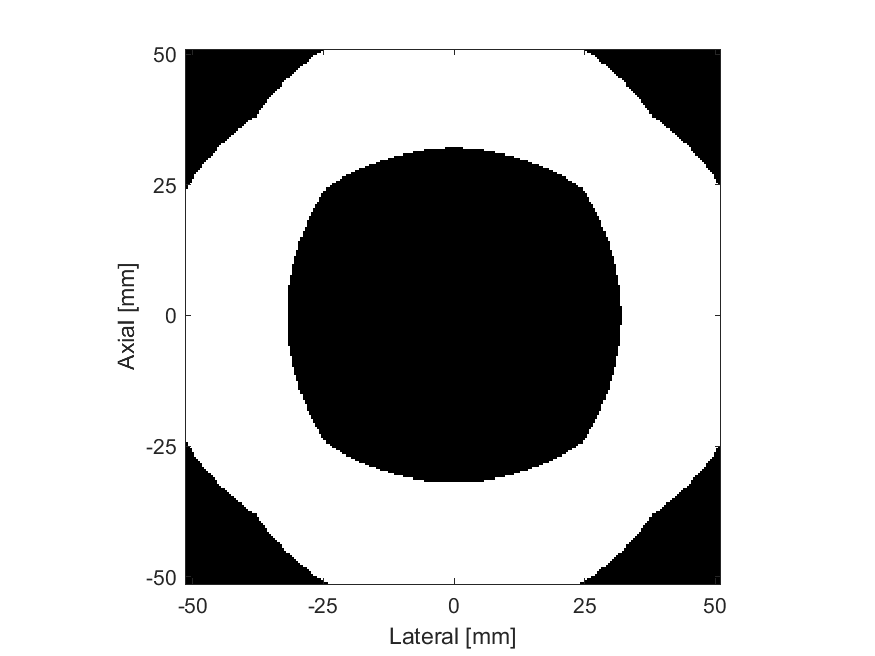}
\label{fig:2c} }
\caption{Experimental setup used to quantify the consistency of the PAT forward--adjoint operator with respect to the inner product identity~\eqref{eq:inner-product-test} using off-grid line receivers. (a) Randomly generated initial pressure distribution \(p_0\) and the circular array of off-grid line receivers. (b) Randomly generated scaled Dirichlet-type boundary data \(\frac{y^D}{a_R}\). c) Binary mask of the sampled points contributing to the approximation
of the Dirac delta distributions for all receiver nodes; black denotes false and white denotes true.}
  \label{fig:2}
\end{figure}

\subsection{Image Reconstruction}

This section presents a scenario for evaluating the performance of the iterative PAT image reconstruction algorithm introduced in Section~\ref{sec:pat-minimization}, which relies on the PAT forward and adjoint operators described in Sections~\ref{sec:pat-forward} and~\ref{sec:pat-adjoint}. To avoid an inverse crime, different discretization parameters are used for the simulation of the measured PAT data and for the iterative image reconstruction carried out on that data.

\subsubsection{Simulation of PAT Measurements}

The measured PAT data used as a benchmark for image reconstruction is simulated on a computational grid consisting of \(512\) sampled points, positioned within the range \([-11.0080, 10.9650] \, \mathrm{mm}\) with a grid spacing of \(43 \, \mu\mathrm{m}\) along both Cartesian coordinates. Perfectly matched layers (PMLs) consisting of \(20\) sampled points, using the same grid spacing as the main grid, are appended to all four sides. The pressure wavefield is simulated and recorded over the time interval \([0, 20.75]\, \mu\mathrm{sec}\) with a temporal spacing of \(5.733 \,  \mathrm{nsec}\), resulting in \(3621\) time steps. This setup corresponds to the parameter values \(d = 2\), \(N_i = 512^2\), \(\Delta x = 43\, \mu\mathrm{m}\), \(T = 20.75\, \mu\mathrm{sec}\), \(\Delta t = 5.733\, \mathrm{nsec}\), and \(N_t = 3621\). 

A total of \(256\) line receivers, each with a half-length of \(0.1\, \mathrm{mm}\), whose centers lie on a circle of radius \(9\, \mathrm{mm}\), are used to record the pressure wavefield in time using the discretized reception formula~\eqref{eq:forward-reception-dis}. Each receiver consists of \(19\) line segments connected by \(20\) nodes; that is, we use \(N_{j,r} = 20\) and \(N_{K,r} = 19\), which remain fixed for all receivers. In discretizing the reception step via formula~\ref{eq:forward-reception-dis}, we nulled the regularized Dirac delta distributions at sampled points far from the receiver nodes. Specifically, sampled points for which the value \(\bar{\delta}_b\) was less than \(\frac{0.001}{b^d}\) (with \(d = 2\)) were set to zero. Figure~\ref{fig:3a} shows a binary map indicating the sampled points that contribute to the approximation of the Dirac delta distributions across all receiver nodes. Figure~\ref{fig:3c} shows a vessel phantom used as the initial pressure source distribution \(p_0\) for this simulation. In this figure, the circular array of line receivers is shown in color.

After the simulation was performed, additive white Gaussian noise (AWGN) with a level of \(30\,\mathrm{dB}\) relative to the peak of each time trace was added to each time trace.

\subsubsection{PAT Image Reconstruction}

As explained in Section~\ref{sec:pat-minimization}, image reconstruction aims to determine the initial pressure source distribution \(p_0\) that minimizes the scaled objective function~\eqref{eq:objective-function}. This is accomplished through the iterative updates in~\eqref{eq:iterate}, where the corresponding step directions are computed using the formula~\eqref{eq:step-direction}. This procedure requires repeated evaluations of the PAT forward and adjoint operators. The step directions are scaled by a fixed step length. To avoid an inverse crime, numerical settings different from those used for simulating the PAT measurements are employed for reconstructing images from the measured PAT data.

The iterative computation of the forward and adjoint operators is performed on a computational grid consisting of \(440\) sampled points positioned within the interval \([-11.0000, 10.9500]\,\mathrm{mm}\), with a grid spacing of \(50\,\mu\mathrm{m}\) along both Cartesian coordinates. The computational grid is expanded on all four sides with Perfectly Matched Layers (PMLs) consisting of \(20\) sampled points, using the same grid spacing as the main grid. As the time array used for recording the boundary data is known in a practical setting, the pressure wavefield is simulated using the same time array as that used for the simulation of the PAT measurements. This setup corresponds to the parameter values \(d = 2\), \(N_i = 440^2\), \(\Delta x = 50\,\mu\mathrm{m}\), and the same time array parameters as those used for the simulation of the PAT measurements. 

The inverse crime is avoided not only in the discretization of the free-space domain (i.e., the positions of sampled points on the grid), but also in the discretization of the boundary (i.e., the positions of nodes on the receiver surfaces). Accordingly, for image reconstruction, each receiver consists of \(15\) line segments connected by \(16\) nodes; that is, we use \(N_{j,r} = 16\) and \(N_{K,r} = 15\), which remain fixed for all receivers. For the discretization of the reception step of the forward operator using formula~\ref{eq:forward-reception-dis} and the emission step of the adjoint operator using formula~\eqref{eq:adjoint-emission-dis}, the regularized Dirac delta distributions were nulled at sampled points far from the receiver nodes. Specifically, sampled points for which the value \(\bar{\delta}_b\) was less than \(\frac{0.005}{b^d}\) (with \(d = 2\)) were set to zero. Figure~\ref{fig:3b} shows a binary map indicating the sampled points that contribute to the approximation of the Dirac delta distributions across all receiver nodes.

The minimization is performed iteratively using the updates in~\eqref{eq:iterate}. The termination criterion~\eqref{eq:termination} is satisfied after 25 iterations, resulting in the final reconstructed image shown in Figure~\ref{fig:3d}. The values of the objective function are plotted against the iteration index in Figure~\ref{fig:3e}, demonstrating the monotone convergence of the algorithm. Figure~\ref{fig:3f} shows the relative error of the reconstructed images with respect to the ground truth and against the iteration index, defined as
\begin{align}
RE^n(\%) = \frac{\left\| \left( p_0^n\right)_{\mathrm{interpolated}} - \left( p_0\right)_{\mathrm{phantom}} \right\|_2}{\left\| \left( p_0\right)_{\mathrm{phantom}} \right\|_2} \times 100,
\end{align}
where \( \left( p_0^n \right)_{\mathrm{interpolated}} \) denotes the reconstructed initial pressure distribution at iteration \(n\), interpolated onto the grid used for the simulation of PAT measurements, \( \left( p_0 \right)_{\mathrm{phantom}} \) denotes the initial pressure distribution of the phantom used as the ground truth, and \(RE^n\) denotes the relative error at iteration \(n\).

\begin{figure} 
\centering
\subfigure[]{\includegraphics[width=0.45\textwidth]{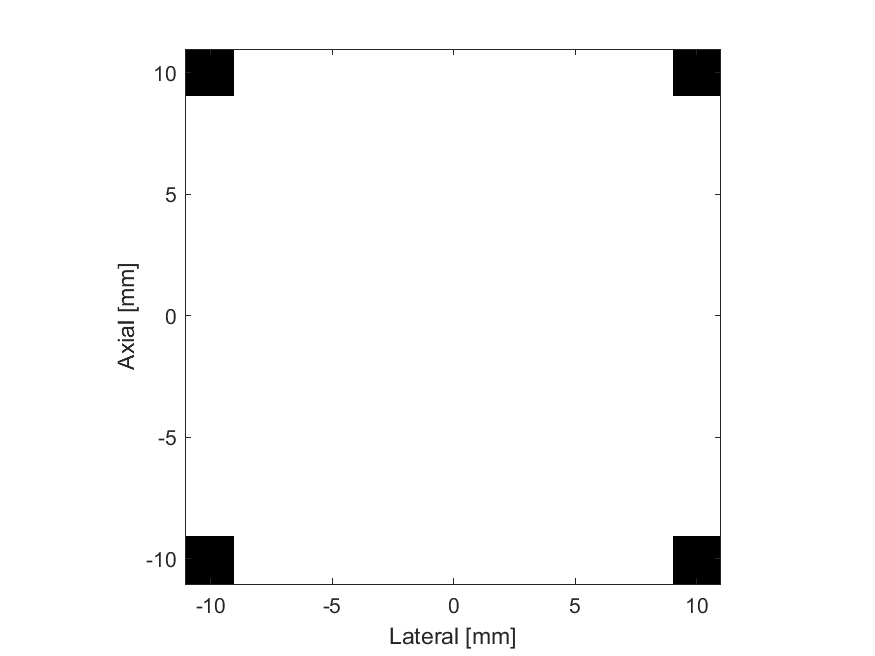}
\label{fig:3a}  }
\subfigure[]{\includegraphics[width=0.45\textwidth]{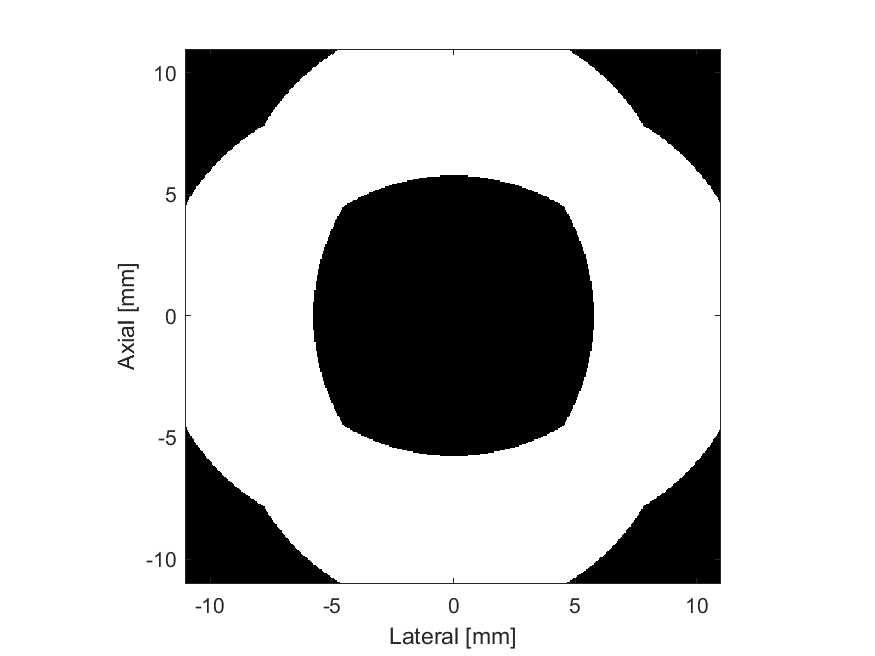}
\label{fig:3b} }
\subfigure[]{\includegraphics[width=0.45\textwidth]{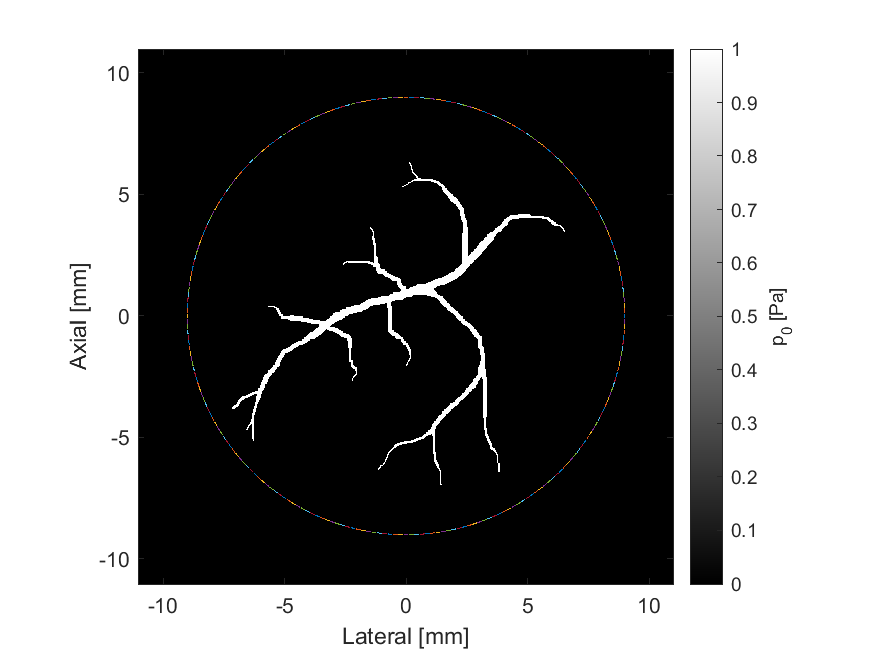}
\label{fig:3c} }
\subfigure[]{\includegraphics[width=0.45\textwidth]{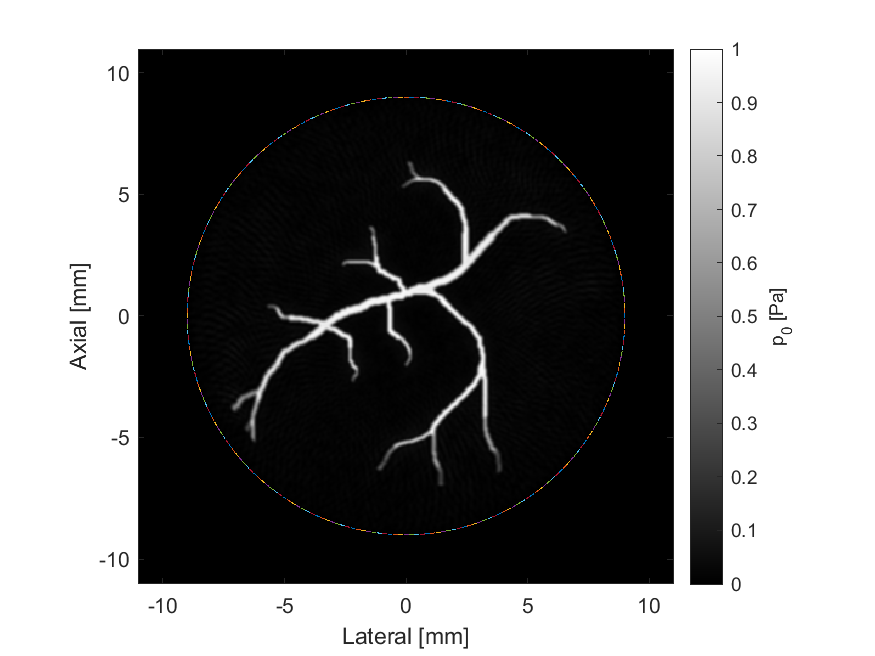}
\label{fig:3d} }
\subfigure[]{\includegraphics[width=0.45\textwidth]{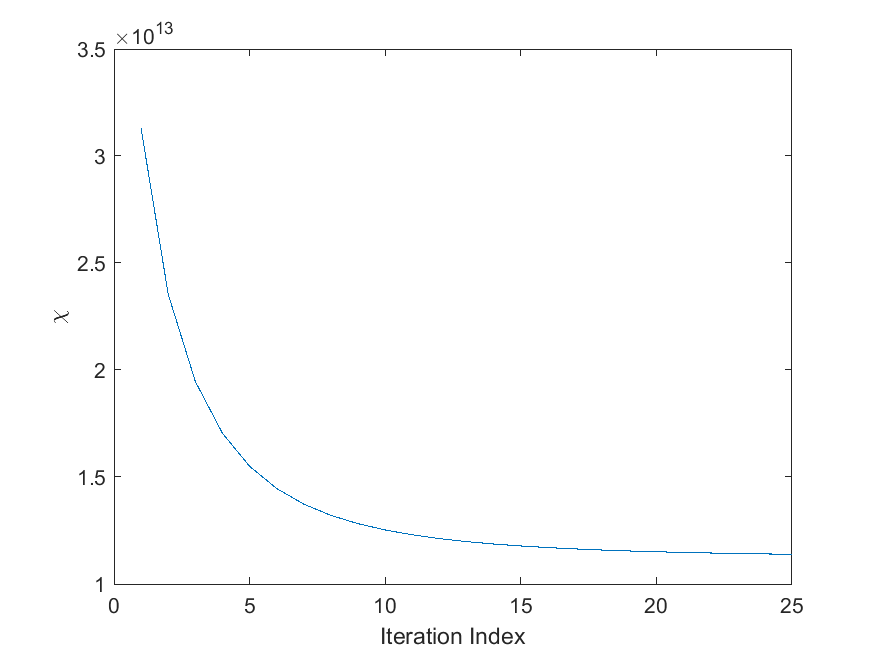}
\label{fig:3e} }
\subfigure[]{\includegraphics[width=0.45\textwidth]{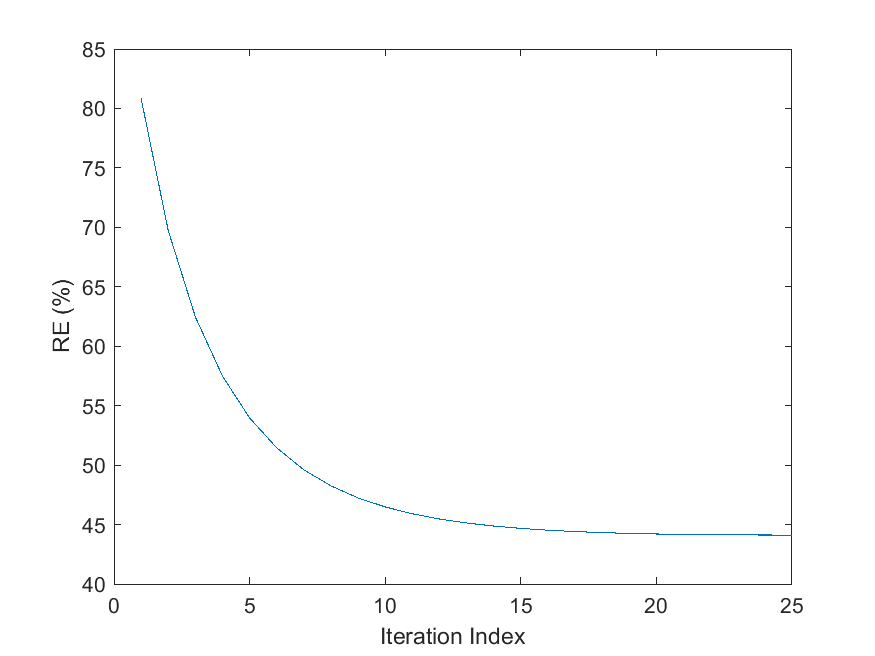}
\label{fig:3f} }
\caption{Binary mask of the sampled points contributing to the approximation of the Dirac delta distributions for all receiver nodes; black denotes false and white denotes true.: (a) simulation of measurements, (b) iterative image reconstruction. The initial pressure source distribution \(p_0\): (c) phantom used for the simulation of measurements (ground truth), (d) the final reconstructed image after 25 iterations. Algorithm convergence: (e) objective function values plotted against the iteration index; (f) relative error values plotted against the iteration index.}
\label{fig:3}
\end{figure}

\section{Discussion and Conclusion}

In this manuscript, we extended our recently proposed acoustic forward--adjoint pair of operators \cite{Javaherian2022} to photoacoustic tomography (PAT). The main contribution of the proposed acoustic forward--adjoint operator was a reception operator that maps, at each time step, the pressure wavefield in free space to Dirichlet-type data on the receiver surface \cite{Javaherian2022}. It was shown that this reception operator acts as a left-inverse of an emission operator that maps, at each time step, a Dirichlet-type source restricted to an emitter surface to the pressure wavefield in free space. The derived emission and reception operators satisfy the reciprocity law, which is a fundamental principle in physics. It was shown that, for a forward operator employing this reception operator, the corresponding adjoint operator is a time-reversed double-layer potential integral formula, which is equivalent to a time-reversal operator subject to the Dirichlet boundary condition.

On a Cartesian grid, this formulation expresses the pressure field restricted to a boundary surface as a smeared discontinuity in the field \cite{Javaherian2022}. It was shown that while the smeared singularity arising in the emission operator is canceled by numerical integration, the reception operator depends on the bandwidth of the regularized Dirac delta distribution used to regularize the singularity that arises when restricting the field to the surface \cite{Javaherian2022}. To avoid this non-uniqueness, we employ a scaled variant of the reception operator that removes this bandwidth dependency.

In this manuscript, the forward--adjoint operator framework was tailored to PAT. Accordingly, the PAT forward operator was expressed as a composite operator consisting of an acoustic operator equipped with the scaled reception operator proposed in \cite{Javaherian2022}, and an operator describing the photoacoustic source in terms of an initial pressure distribution. The PAT source operator is singular at the time origin and is therefore regularized in time.

An inner-product relation was then derived to obtain the adjoint of the PAT source operator, which was subsequently coupled with the acoustic adjoint operator to form the complete PAT adjoint operator. A discretized form of the inner-product relation for the full PAT forward--adjoint operator was verified using both on-grid and off-grid finite-sized receivers, demonstrating the mathematical consistency of the derived operator pair.

Finally, a photoacoustic scenario was constructed to demonstrate the performance of the proposed PAT forward--adjoint operator for reconstructing an image of the initial pressure distribution from Dirichlet-type boundary data within an iterative minimization framework. The results show that the reconstructed image is consistent with the ground truth both qualitatively and quantitatively. Future work will focus on translating the proposed PAT forward--adjoint operator framework toward clinical applications.

\bibliography{references}
\bibliographystyle{spiebib}

\end{document}